\def\today{10.1.13} 
\theoremstyle{plain} \newtheorem{theorem}{Theorem}[section]
\newtheorem{lemma}[theorem]{Lemma}
 \theoremstyle{definition}
\newtheorem{definition}[theorem]{Definition} \theoremstyle{remark}
\newtheorem{remark}[theorem]{Remark}
\newcommand{\R}{{\mathbb R}} \newcommand{\U}{{\mathcal U}}
\newcommand{\Z}{{\mathbb Z}}
\newcommand{\Tr}{{\mathcal T}_K}
\newcommand{\resto}{{\mathcal
R}}
\newcommand{\Sc}{{\mathcal S}}
\newcommand{\T}{\mathbb{T}}
\def\norma#1{\left\| #1\right\|}
\def\sleq{\leq\kern-6pt \cdot\null\hskip4pt}
\def\norma#1{\left\| #1\right\|}
\def\sleq{\preceq}
\newtheorem{thm}{Theorem}[section]
\def\U{{\mathcal U}}
\title{Almost global existence for a fractional
  Schrodinger equation on spheres and tori}
\author{Dario Bambusi, Yannick Sire}
\date{\today}
\begin{document}
\maketitle

\begin{abstract}
We study the time of existence of the solutions of the following
Schr\"odinger equation
$$i\psi_t = (-\Delta)^s \psi +f(|\psi|^2)\psi,\,\,\, x \in \mathbb
S^d\ ,\ or\ x\in\T^d$$ where $(-\Delta)^s$ stands for the spectrally
defined fractional Laplacian with $s>1/2$ and $f$ a smooth function. We prove an almost global
existence result for almost all $s>1/2$.
\end{abstract}

\section{Introduction and result}
We consider the equation 

\begin{equation}\label{schro}
i\psi_t = (-\Delta)^s \psi +f(|\psi|^2)\psi,\,\,\, x \in \mathbb S^d
\ or\ x\in\T^d\ ,
\end{equation}
where $(-\Delta)^s$ is the spectrally defined fractional Laplacian,
i.e. the $s$-th power of the Laplace-Beltrami operator $-\Delta$ and
$f$ a function of class $C^{\infty}$ in a neighborhood of 0. 

We will study such an equation in high regularity Sobolev spaces. We
denote by $H^r$ the Sobolev space of the $L^2$ functions which admit
$r$ weak derivatives which are square integrable. We will endow it with
the norm
\begin{equation}
\label{norm}
\norma{\psi}^2_{H^r}:=\int\left[\left|\psi(x)\right|^2+\bar\psi(x)(-\Delta
  )^r\psi(x)\right] d x\ .
\end{equation}
Our main result is the following  theorem.

\begin{theorem}
\label{main}
Fix $K\geq 1$, then there exists a set $\Sc\subset(1/2,+\infty)$, having
zero measure, such that, for any $s\in (1/2,+\infty)-\Sc$ there exists a
positive $r_K$ and for any $r>r_K$ there exist $\epsilon_{r,K}, T_{r,K}$
with the following property: if the initial datum $\psi_0$ fulfills 
\begin{equation}
\label{ini}
\epsilon:=\norma{\psi_0}_{H^r}<\epsilon_{r,K}\ ,
\end{equation}
then one has
\begin{equation}
\label{stima}
\norma{\psi(t)}_{H^r}<2\epsilon\ ,\quad \forall
\left|t\right|\leq\frac{T_{r,K}}{\epsilon^K}\ .
\end{equation}
\end{theorem}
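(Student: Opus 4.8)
The plan is to regard \eqref{schro} as a Hamiltonian system and run a Birkhoff normal form procedure; the only genuine obstruction is an arithmetic non-resonance condition on the fractional frequencies $\lambda_j^{s}$, which will hold precisely for $s$ outside the zero-measure set $\Sc$. Setting $F'=f$, the energy
$$H(\psi)=\int\left[\bar\psi(-\Delta)^s\psi+F(|\psi|^2)\right]\,dx=:\homega(\psi)+P(\psi)$$
generates \eqref{schro} with respect to the standard symplectic form, with $\homega$ the quadratic integrable part. I would expand $\psi$ in an $L^2$-orthonormal basis of eigenfunctions of $-\Delta$ (complex exponentials on $\T^d$, spherical harmonics on $\mathbb S^d$), writing $\psi=\sum_j\xi_j e_j$ with $-\Delta e_j=\lambda_j e_j$. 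In these coordinates $\homega=\sum_j\omega_j|\xi_j|^2$ with fractional frequencies $\omega_j:=\lambda_j^{s}$, while $\norma{\psi}_{H^r}^2\simeq\sum_j(1+\lambda_j^{r})|\xi_j|^2$, so the phase space is a weighted $\ell^2$-type sequence space.

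Next I would verify that $P$ satisfies the structural (tame, multilinear) hypotheses required by the abstract normal form machinery. Since $H^r$ is a Banach algebra for $r>d/2$ and $f$ is smooth near the origin, the Taylor expansion of $P$ produces a convergent series of homogeneous polynomials, each enjoying tame estimates and a tame-modulus bound, whose associated Hamiltonian vector fields map a neighbourhood of $0$ in $H^r$ smoothly into $H^r$. This step is essentially routine bookkeeping once the algebra property and the smoothness of $f$ are in hand.

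The heart of the proof, and the step I expect to be the main obstacle, is the verification of a non-resonance condition on the $\omega_j=\lambda_j^{s}$ of the form
$$\left|\sum_{i=1}^{N}\sigma_i\,\omega_{j_i}\right|\ \geq\ \frac{\gamma}{\mu^{\alpha}},\qquad \sigma_i\in\{\pm1\},$$
valid for all $N\leq K+2$ and all admissible multi-indices whose third-largest frequency is $\mu$, whenever the combination does not vanish identically; this is what controls the small divisors generated at each normal form step. Fixing the combinatorial data, I would view $s\mapsto\sum_i\sigma_i\lambda_{j_i}^{s}$ as a real-analytic, non-constant function of $s\in(1/2,\infty)$; a quantitative sub-level-set (transversality) estimate then bounds the measure of the set of $s$ for which the inequality fails for given data, and a careful summation over all data yields a set $\Sc$ of zero measure outside which the bound holds with $s$-dependent $\gamma,\alpha$. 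The real difficulties are the high multiplicity and near-degeneracy (clustering) of the eigenvalues $\lambda_j$ on $\T^d$ and $\mathbb S^d$: I would group modes into blocks of comparable frequency and exploit that only the third-largest index enters the estimate, so that mass concentrated on two large, nearly resonant modes is harmless and the resonant part of the normal form depends only on the block superactions.

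Finally I would invoke the abstract Birkhoff normal form theorem from the quoted framework. The structural hypotheses together with the non-resonance condition produce, for $r>r_K$ and $\epsilon$ small, a canonical transformation $\Tr$, that is $\epsilon$-close to the identity in $H^r$, conjugating $H$ to $\homega+Z+\resto$, where $Z$ Poisson-commutes with $\homega$, hence depends only on the superactions and approximately preserves $\norma{\cdot}_{H^r}$, while $\resto$ is a remainder of size $O(\epsilon^{K+2})$ on the ball of radius $\epsilon$. A standard bootstrap argument then controls $\frac{d}{dt}\norma{\psi(t)}_{H^r}^2=O(\epsilon^{K+2})$ and yields $\norma{\psi(t)}_{H^r}<2\epsilon$ for $\left|t\right|\lesssim\epsilon^{-K}$, which is exactly \eqref{stima}.
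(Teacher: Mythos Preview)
Your overall architecture is correct and matches the paper's, but you have missed the one genuine difficulty beyond the nonresonance estimate: the zero frequency. On both $\mathbb S^d$ and $\T^d$ the lowest eigenvalue of $-\Delta$ is $0$, so $\omega_0=\lambda_0^{s}=0$. Consequently $\homega=\sum_{j\geq 1}\omega_j I_j$ does not depend on the zero-mode action $I_0$ at all, and the inference ``$Z$ Poisson-commutes with $\homega$, hence depends only on the superactions'' is false as stated: commutation with $\homega$ (equivalently, the nonresonance condition you verify for $\omega_1,\omega_2,\dots$) only forces $\{Z,I_j\}=0$ for $j\geq 1$ and places no constraint whatsoever on the dependence of $Z$ on $(\xi_0,\eta_0)$. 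A priori the zero mode can drift, and your bootstrap on $\norma{\psi}_{H^r}$ breaks down.

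The paper closes this gap using the Gauge invariance of \eqref{schro}. The $L^2$ norm $\Gamma=\sum_{j,k}|\xi_{jk}|^2$ is an exact integral, and Gauge invariance forces every monomial $\xi^L\eta^J$ appearing in the Hamiltonian (and, since the normal form transformation is built from such monomials, also in $Z^{(K)}$) to satisfy $\sum_{j,k}(L_{jk}-J_{jk})=0$. Combining this with $\sum_k(L_{jk}-J_{jk})=0$ for every $j\geq 1$ (which is what the normal form actually gives) yields $L_{0}-J_{0}=0$, hence $\{Z^{(K)},I_0\}=0$ after all, and only then does the standard bootstrap go through. You should insert this argument between your normal form step and the final estimate.
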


We remark that our method is unable to deal with $s\in(0,1/2)$, due to
the fact that the growth of the frequencies is too slow and thus the
differences between couple of frequencies can accumulate on open sets.

The theorem is an application of Theorem 4.3 (and Proposition 4.1) of
\cite{bam08}, which in turn is based on the Birkhoff normal form
theory developed in \cite{Bam01,BG, BDGS,gre07,bam08}. 
Some care is
needed in order to apply such a theory, because in our case one of the
frequencies vanishes and thus, a priori, the normal form does not
allow to control the motion of the corresponding mode. Furthermore,
one has to check that the frequencies fulfill the strong nonresonance
condition introduced in those papers. The first problem is solved
exploiting the Gauge invariance of the equation (more or less as in
\cite{bamsacc}), the second one by a variant of the method used in
\cite{Bam01} for the wave equation. 

We recall that such a theory has been recently extended by Delort to
some quasilinear equations \cite{delort,delHal}, however we did not
investigate the applicability of his method to the present case.

Concerning existence of the dynamics of the fractional Schr\"odinger
equation, local existence of smooth solution is trivial. On the
contrary, as far as the dimension is larger then 1, very little is
known on the time of existence of solutions, indeed, to the knowledge of the
authors, the only existing results are those of \cite{GW} in $\R^d$, where
dispersion is exploited in order to prove global-wellposedness and
scattering for small data. On the contrary nothing is known on compact
manifold, where such a mechanism clearly fails.

\section{Proof of Theorem \ref{main}}\label{proof}

To be determined we focus on the case of the sphere which is
slightly more difficult. The case of tori is almost
identical. 

We expand $\psi $ in spherical harmonics
$$\psi=\sum_{j,k}\xi_{jk}Y_{jk}(x)$$
and 
$$\overline \psi=\sum_{j,k}\eta_{jk}Y_{jk}(x)$$
where $Y_{jk}$ are the spherical harmonics.

The Hamiltonian writes

\begin{equation}\label{hamil}
H(\xi, \eta)=\sum_{ j \geq 0} \omega_j \sum_{k} \xi_{jk} \eta_{jk} + H_p (\xi,\eta)
\end{equation}
where $\omega_j=(j(j+d-1))^s$ and $H_p$ has a zero of order $3$ at the
origin. Then eq. \eqref{schro} is equivalent to the Hamilton equations 
\begin{equation}
\label{hamilt}
\dot \xi_{jk}=- i\frac{\partial H}{\partial \eta_{jk}}\ ,\quad \dot
\eta_{jk}= i\frac{\partial H}{\partial \xi_{jk}}\ .
\end{equation}
We will also use the Poisson brackets of two functions $F,G$ on the
phase space, which are defined by 
\begin{equation}
\label{poi}
\left\{F;G\right\}:=i\sum_{jk} \left[\frac{\partial F}{\partial
    \eta_{jk}}\frac{\partial G}{\partial
    \xi_{jk}}-\frac{\partial F}{\partial
    \xi_{jk}} \frac{\partial G}{\partial
    \eta_{jk}}\right]\ .
\end{equation}

The theory developed in \cite{ds,Bam01,BG, BDGS,gre07,bam08} applies
to Hamiltonians of the form \eqref{hamil} in which the nonlinear part
belongs to a suitable class (called functions with localized
coefficients in \cite{bam08}). It was proved in \cite{bam08} that in
our case the nonlinearity belongs to such a class, so we refer to that
paper for the proof and the precise definition of such a property.

We come to the nonresonance property of the frequencies. 

\begin{definition}
\label{Knr}
Fix $K\geq 3$, then the frequencies $(\omega_1,...,\omega_{\infty})$
are said to fulfill the property ($K$-NR) if there exist $\gamma>0,$
and $\alpha\in\R$ such that for any $N$ large enough one has
\begin{eqnarray}
\label{nr.1}
\left|\sum_{j\geq 1}\omega_jL_j
\right|\geq\frac{\gamma}{N^\alpha}\ ,
\end{eqnarray}
for any $L\in\Z^\infty$, fulfilling
$0\not=|L|:=\sum_j|L_j|\leq K+2$, $\sum_{j>N}|L_j|\leq2$.
\end{definition}

We are going to prove that for almost all $s$ in the considered
interval such a property is fulfilled.

\begin{thm}\label{freqTh}
There exists a zero measure set $\mathcal S \subset (1/2,+\infty)$ 
such that if $s \in  (1/2,+\infty)-\mathcal S$ then the frequencies 
$$(\omega_1,..., \omega_\infty, ...)$$
fulfill the property $(K-NR)$
\end{thm}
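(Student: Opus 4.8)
The plan is to prove that for each fixed $K$, the set of "bad" $s$ (those for which the frequencies fail the $K$-NR condition) has zero measure, exploiting the fact that $\omega_j = (j(j+d-1))^s$ depends analytically on $s$. The key idea is that the small-divisor estimate \eqref{nr.1} must fail only on a set where a certain nonconstant analytic function of $s$ is small, and such sets have controllable measure.

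Let me sketch how I would prove this.

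---

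First I would fix an integer vector $L \in \Z^\infty$ with $0 \neq |L| \leq K+2$ and $\sum_{j>N}|L_j| \leq 2$, and consider the function
$$
s \longmapsto \phi_L(s) := \sum_{j\geq 1} \omega_j(s)\, L_j = \sum_{j\geq 1} \big(j(j+d-1)\big)^s L_j\ .
$$
This is a finite sum (only finitely many $L_j$ are nonzero) of functions of the form $a_j^s$ with distinct bases $a_j = j(j+d-1)$, hence it extends to a real-analytic — indeed entire — function of $s$. The crucial observation is that $\phi_L$ is \emph{not identically zero}: since the bases $a_j$ are pairwise distinct, the functions $s\mapsto a_j^s = e^{s\log a_j}$ are linearly independent over $\R$ (exponentials with distinct rates), so $\phi_L \equiv 0$ would force all $L_j = 0$, contradicting $|L|\neq 0$. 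I expect this linear-independence step to be routine but essential.

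\medskip

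Second, I would invoke a quantitative measure estimate for sublevel sets of nonzero analytic functions. The standard tool (essentially a consequence of the argument in \cite{Bam01} for the wave equation, or a Łojasiewicz-type / normal-family estimate) gives: on any compact interval $[a,b]\subset(1/2,+\infty)$, there exist constants $C, \tau$ depending on $[a,b]$ and on the number of nonzero components of $L$ such that
$$
\meas\Big\{\, s\in[a,b] : |\phi_L(s)| < \delta \,\Big\} \leq C\,\delta^{1/\tau}\ .
$$
The delicate point — and what I expect to be \textbf{the main obstacle} — is controlling the constants $C,\tau$ uniformly as $L$ ranges over the infinitely many admissible vectors, so that the resulting union of bad sets is summable. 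The mechanism that makes this work is the constraint $\sum_{j>N}|L_j|\leq 2$: at most two of the "high" frequencies can appear, and the corresponding bases $a_j$ grow polynomially in $j$. One uses this to show that $\phi_L(s)$ is dominated by its largest high-frequency term $a_{j_{\max}}^s$, giving a lower bound on derivatives (or on the function away from a small set) with a quantitative dependence on $j_{\max}$ that beats the number of exceptional intervals to be removed.

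\medskip

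Third, I would set up the Borel–Cantelli / exhaustion argument. For each $N$ and each admissible $L$ with the above support condition, remove the set $\{s : |\phi_L(s)| < \gamma/N^\alpha\}$; choosing $\alpha$ large enough (depending on $K$, $d$, and the compact interval) makes the total measure of the removed set over all such $L$ and all $N$ summable — hence arbitrarily small, and after intersecting over an exhaustion $[a,b]\nearrow(1/2,+\infty)$ and letting the tolerance go to zero, one obtains a set $\Sc$ of measure zero outside of which \eqref{nr.1} holds for every admissible $L$ with the appropriate $\gamma,\alpha$. This is exactly the property $(K\text{-NR})$.

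\medskip

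\emph{Remark on the structure of the difficulty.} The linear independence of the $a_j^s$ guarantees $\phi_L\not\equiv0$ for each individual $L$, but proving $(K\text{-NR})$ requires a \emph{uniform} small-divisor estimate over infinitely many resonance relations. The support restriction $\sum_{j>N}|L_j|\leq2$ is precisely what renders the problem tractable: it confines the high-frequency content to at most a rank-two contribution, whose dominant term forces the requisite analytic nondegeneracy with $N$-dependence controllable by the polynomial growth of the $\omega_j$. This is the variant of the \cite{Bam01} wave-equation argument alluded to in the introduction.
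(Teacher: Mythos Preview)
Your architecture---analytic dependence of $\phi_L(s)=\sum_j L_j\lambda_j^s$ on $s$, sublevel-set measure bounds, Borel--Cantelli summation---is the correct one and matches the paper's route through the machinery of \cite{bam08}. But the proposal leaves two concrete holes that the paper fills explicitly.

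First, you observe that the exponentials $s\mapsto \lambda_j^s$ are linearly independent, hence $\phi_L\not\equiv0$, and then invoke a generic ``\L ojasiewicz-type / normal-family'' sublevel estimate. That is only qualitative: to make the constants in the sublevel bound uniform over all admissible $L$ with indices $\leq K$ one needs a \emph{quantitative} lower bound on some derivative of $\phi_L$. The paper obtains this via the Wronskian: since $d^k\omega_j/ds^k=(\ln\lambda_j)^k\omega_j$, the matrix $\big(d^k\omega_{j_i}/ds^k\big)_{i,k}$ has determinant
\[
\omega_{j_1}\cdots\omega_{j_\kappa}\,\det\big((\ln\lambda_{j_i})^k\big)
=\omega_{j_1}\cdots\omega_{j_\kappa}\prod_{l<k}\ln\frac{\lambda_{j_k}}{\lambda_{j_l}}
\geq \frac{C}{K^{\kappa^2}}\ ,
\]
a Vandermonde computation (Lemma~\ref{det}). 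This bound is what feeds directly into the Pyartli-type derivative-to-measure lemma of \cite{bam08} and delivers uniform $C,\tau$; your proposal never supplies an analogue.

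Second, you do not identify where the hypothesis $s>1/2$ enters. In the paper it is used precisely at the step you flag as ``the main obstacle'': passing from low-index combinations to combinations containing one or two arbitrarily large indices (Lemmas~6.8--6.9 of \cite{bam08}). This requires control of frequency gaps $\omega_{j+1}-\omega_j\sim 2s\,j^{2s-1}$, which are bounded below (indeed grow) only when $s>1/2$; your heuristic that ``$\phi_L$ is dominated by its largest high-frequency term'' tacitly relies on exactly this, but as written it would not distinguish $s>1/2$ from $s\in(0,1/2)$, where the theorem actually fails by the paper's own remark.
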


The proof of such a theorem is a straightforward generalization of the
proof of Theorem 4.4 of \cite{bam08}. The only difference is given by
the estimate from below of the determinant of the matrix formed by the
vectors of the derivatives of the frequencies with respect to
$s$. Such an estimate is done explicitly in the following Lemma
\ref{det}.

We also remark that the condition $s>1/2$ is needed in order to
pass from Lemma 6.8 of \cite{bam08} to Lemma 6.9 of the same paper,
namely for passing from the estimate of linear combinations of
frequencies with index smaller then a fixed $K$ to linear combinations
involving also a couple of arbitrary large indexes (as in the second
Melnikov condition of KAM theory).

\begin{lemma}\label{det}
For any $\kappa \leq K$, consider $\kappa $ indexes $1 \leq j_1\leq \cdot \leq j_\kappa \leq K$; consider the matrix $D$ given by 

$$\begin{pmatrix} \omega_{j_1} & \cdot \cdot \cdot& \omega_{j_\kappa}
  \\ d \omega_{j_1}/ds & \cdot \cdot \cdot& d\omega_{j_\kappa}/ds
  \\ \vdots & \vdots &\vdots 
\\ d^{\kappa-1}
  \omega_{j_1}/ds^{\kappa-1} & \cdot \cdot \cdot&
  d^{\kappa-1}\omega_{j_\kappa}/ds^{\kappa-1}
\end{pmatrix}
$$ Denote by $\mathcal D$ its determinant. Then there exists $C>0$
s.t. the following estimate holds
$$|\mathcal D| \geq \frac{C}{K^{\kappa^2}}.$$
\end{lemma}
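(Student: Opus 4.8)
The plan is to exploit the fact that, as functions of $s$, the frequencies are pure exponentials, which turns $D$ into a Vandermonde matrix up to a diagonal rescaling. I would write $\omega_j=(j(j+d-1))^s=e^{s a_j}$ with
$$a_j:=\log\bigl(j(j+d-1)\bigr),$$
so that differentiating in $s$ gives $\frac{d^m\omega_j}{ds^m}=a_j^m\,e^{s a_j}=a_j^m\,\omega_j$. Hence the $(m,i)$ entry of $D$ (rows $m=0,\dots,\kappa-1$) is exactly $a_{j_i}^{m}\,\omega_{j_i}$.

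Second, I would factor $\omega_{j_i}$ out of the $i$-th column. This yields
$$\mathcal D=\Bigl(\prod_{i=1}^{\kappa}\omega_{j_i}\Bigr)\,\det\bigl(a_{j_i}^{m}\bigr)_{\substack{0\le m\le \kappa-1\\ 1\le i\le\kappa}}=\Bigl(\prod_{i=1}^{\kappa}\omega_{j_i}\Bigr)\prod_{1\le i<i'\le\kappa}\bigl(a_{j_{i'}}-a_{j_i}\bigr),$$
by the classical Vandermonde formula. Since $j_i\ge1$ and $s>1/2>0$ we have $\omega_{j_i}=(j_i(j_i+d-1))^s\ge1$, so the prefactor is bounded below by $1$ and may be discarded. (This also shows the indices must be taken pairwise distinct, i.e. strictly increasing, otherwise two columns coincide and $\mathcal D=0$; I therefore assume $j_1<\cdots<j_\kappa$.) It then remains to bound from below the Vandermonde product of the $a_j$'s, and since $j\mapsto a_j$ is increasing all factors are positive.

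Third, the crux, I would estimate the gaps $a_{j_{i'}}-a_{j_i}$ for $1\le j_i<j_{i'}\le K$. Writing
$$a_{j_{i'}}-a_{j_i}=\int_{j_i}^{j_{i'}}\frac{2t+d-1}{t(t+d-1)}\,dt$$
and using that the integrand is bounded below by $2/(t+d-1)\ge 2/(K+d-1)$ on the interval of integration, one gets $a_{j_{i'}}-a_{j_i}\ge c\,(j_{i'}-j_i)/K$ for a constant $c=c(d)>0$. Because the $j_i$ are distinct integers, $\prod_{i<i'}(j_{i'}-j_i)\ge1$, and therefore
$$|\mathcal D|\ge\prod_{1\le i<i'\le\kappa}\frac{c\,(j_{i'}-j_i)}{K}\ge\Bigl(\frac{c}{K}\Bigr)^{\kappa(\kappa-1)/2}.$$
Since $\kappa(\kappa-1)/2\le\kappa^2$ and $K\ge1$, this is bounded below by $C/K^{\kappa^2}$ with $C:=c^{\kappa(\kappa-1)/2}$ (which one may further bound uniformly for $\kappa\le K$), giving the claim.

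A word on where the difficulty sits: the algebraic reduction to a Vandermonde determinant is the clean step, and the positivity of the frequency prefactor is immediate. The only genuinely quantitative point is the lower bound on the minimal gap between the values $\log(j(j+d-1))$, which for consecutive large indices behaves like $2/K$ and is precisely what forces the inverse powers of $K$ in the estimate. Matching the exact exponent $\kappa^2$ in the statement is then merely the loose comparison $\kappa(\kappa-1)/2\le\kappa^2$.
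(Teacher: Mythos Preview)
Your proof is correct and follows the same approach as the paper: reduce $\mathcal D$ to $\prod_i\omega_{j_i}$ times a Vandermonde determinant in the variables $\ln\lambda_j$, discard the prefactor via $\omega_j\ge1$, and bound each logarithmic gap from below by an inverse power of $K$. The only cosmetic difference is in the gap estimate, where the paper bounds $\ln(\lambda_{j_k}/\lambda_{j_l})=\ln(1+\delta/\lambda_{j_l})\ge C/j_l^2\ge C/K^2$ directly, while your integral representation yields the slightly sharper $c/K$ per factor.
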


\begin{proof}
Denote 
$$\lambda_j = j (j+d-1)$$
then one has 
$$\frac{d^k \omega_j}{ds^k}=(\ln \lambda_j)^k \omega_j\ .
$$
Therefore 
$$\mathcal D= \omega_{j_1} \cdot \cdot \cdot \omega_{j_\kappa}
\left|\begin{matrix}
1& \cdot  \cdot  \cdot& 1 \\
x_{j_1}& \cdot  \cdot  \cdot& x_{j_\kappa}\\
  \vdots  &  \vdots &\vdots \\
x_{j_1}^{\kappa-1} & \cdot  \cdot  \cdot& x_{j_\kappa}^{\kappa-1}Ê
\end{matrix}\right|
=\omega_{j_1} \cdot \cdot \cdot \omega_{j_\kappa}
\prod_{1 \leq l <k \leq \kappa
}(x_{j_k}-x_{j_l})=\omega_{j_1} \cdot \cdot \cdot \omega_{j_\kappa}
\prod_{1 \leq l <k \leq \kappa
}\ln \frac{\lambda_{j_k}}{\lambda_{j_l}}$$ where $x_j:=\ln \lambda_j$. To fix ideas take
$j_k>j_l$, and let $\delta=\lambda_{j_k}-\lambda_{j_l}=(j_k-j_l)(j_k+j_l
+d-1)>d.$ Then

$$\ln \frac{\lambda_{j_k}}{\lambda_{j_l}}= \ln (1+\frac{\delta}{\lambda_{j_l}})\geq \ln (1+\frac{d}{\lambda_{j_l}}) \geq \frac{C}{j_l^2}.$$Thus 
$$\prod_{1 \leq \ell <k \leq \kappa }\ln \frac{\lambda_{j_k}}{\lambda_{j_l}} \geq \prod \frac{C}{K^2} \geq \frac{C}{K^{2\kappa^2}}.$$

Since $\omega_j \geq 1$ for all $j$ the thesis follows. 
\end{proof}

Thus, proceeding as in \cite{bam08} one gets that the frequencies
fulfill the nonresonance condition and Theorem 4.15 of \cite{bam08}
applies. Such a theorem ensures that, defining 
\begin{equation}
\label{actions}
I_j:=\sum_l \xi_{jl}\eta_{jl}\equiv\sum_{l}\left|\xi_{jl}\right|^2\ ,
\end{equation}
the following holds.

\begin{theorem}
\label{main1}
(Theorem 4.15 of \cite{bam08})Fix $K\geq1$, then there exists a finite
$r_K$ a neighborhood $\U_{r_K}^{(K)}$ of the origin in $H^{r_K}$ and a
canonical transformation $\Tr:\U_{r_K}^{(K)}\to H^{r_K}$ which puts
the system in normal form up to order $K+3$, namely s.t.
\begin{equation}
\label{eq:bir1}
H^{(K)}:=H\circ \Tr=H_0+Z^{(K)}+\resto^{(K)}
\end{equation}
where $Z^{(K)}$ and $\resto^{(K)}$ have smooth vector field
\begin{itemize}
\item[(i)]$Z^{(K)}$ is a polynomial of degree $K+2$ which Poisson
  commutes with $I_j$ for all $j\not=0$ (but not necessarily with
  $I_0$;
\item[(ii)] $\resto^{(K)}$ has a small vector field, i.e.
\begin{equation}
\label{resto1}
\norma{X_{\resto^{(K)}}(\psi)}_{H^{r_K}}\leq C\norma{\psi}_{H^{r_K}}^{K+2}\ ,\quad
\forall \psi\in\U_{r_K}^{(K)}\ ;
\end{equation}
\item[(iii)] one has
\begin{equation}
\label{def1}
\norma{\psi-\Tr(\psi)}_{H^{r_K}} \leq C\norma{\psi}_{H^{r_K}}^2\ ,\quad \forall
\psi\in\U_{r_K}^{(K)}\ .
\end{equation}
An inequality identical to \eqref{def1} is fulfilled by the inverse
transformation $\Tr^{-1}$.
\item[(iv)] For any $r\geq r_K$ there exists a subset
  $\U_r^{(K)}\subset\U_{r_K}^{(K)}$ open in $H^r$ such that the
  restriction of the canonical transformation to $\U^{(K)}_r$ is
  analytic also as a map from $H^r\to H^r$ and the inequalities
  \eqref{resto1} and \eqref{def1} hold with $r$ in place of $r_K$.
\end{itemize}\end{theorem}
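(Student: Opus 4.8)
Since this is precisely Theorem 4.15 of \cite{bam08}, the plan is to reconstruct the Birkhoff normal form scheme of that reference, feeding in the two inputs already secured here: that the nonlinearity $H_p$ lies in the class of functions with localized coefficients, and that the frequencies satisfy $(K\text{-NR})$ by Theorem \ref{freqTh}. Write $H=H_0+H_p$ as in \eqref{hamil}, with $H_p=\sum_{n\geq 3}H_n$ and $H_n$ homogeneous of degree $n$ in $(\xi,\eta)$. The transformation $\Tr$ will be built as a finite composition of Lie transforms, i.e.\ time-one flows of Hamiltonian vector fields generated by homogeneous functions $\chi_3,\dots,\chi_{K+2}$, each step removing the non-normalized part of one degree.

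First I would set up and solve the homological equation. At the step that normalizes degree $n$ one seeks $\chi_n$ homogeneous of degree $n$ with $\poisson{H_0}{\chi_n}=Z_n-G_n$, where $G_n$ is the current degree-$n$ part and $Z_n$ its resonant component. Expanding in monomials and using that $\poisson{H_0}{\cdot}$ is diagonal, a monomial with block imbalances $L_j$ (the $\xi$-degree minus the $\eta$-degree in block $j$) is multiplied by the divisor $i\sum_j\omega_j L_j$, which is exactly the quantity controlled in Definition \ref{Knr}. The normal form $Z_n$ collects the monomials with vanishing divisor; for every $j\neq 0$ these have $L_j=0$ and hence Poisson commute with $I_j$, giving (i). Because $\omega_0=0$, a monomial can be resonant with $L_0\neq 0$, so $Z^{(K)}$ need not commute with $I_0$, which is the stated exception. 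Every remaining monomial has $|\sum_j\omega_j L_j|\geq \gamma/N^\alpha$, so $\chi_n$ is obtained by dividing the corresponding coefficients by these divisors.

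Next I would propagate the analytic estimates. The structural heart is that the localized-coefficient class is closed under Poisson brackets and under the solution operator of the homological equation, the polynomial small-divisor loss $N^\alpha$ being absorbed by the decay encoded in the class, and that every element of the class has a tame vector field on the Sobolev scale. This is also the point where the restriction $\sum_{j>N}|L_j|\leq 2$ in $(K\text{-NR})$ is used: the localized-coefficient structure, together with the degree bound $|L|\leq K+2$, reduces all divisors to those with at most two indices above $N$, matching the second Melnikov clause. Pushing $H$ forward by the flow of $X_{\chi_n}$ sends $G_n\mapsto Z_n$ at leading order and creates only higher-degree corrections; iterating over $n=3,\dots,K+2$ produces $H\circ\Tr=H_0+Z^{(K)}+\resto^{(K)}$ with $Z^{(K)}$ a normal form polynomial of degree $K+2$. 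The remainder $\resto^{(K)}$ gathers all terms of degree $\geq K+3$, and since each lies in the class its vector field obeys \eqref{resto1}. The bounds \eqref{def1} for $\Tr$ and $\Tr^{-1}$ follow because each flow is an $O(\norma{\psi}^2)$ perturbation of the identity, and (iv) follows because the tame estimates hold uniformly along the scale, so the finitely many flows are analytic from $H^r$ to $H^r$ for every $r\geq r_K$.

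The main obstacle is exactly this closure-plus-estimates step: proving that solving the homological equation keeps one inside the localized-coefficient class with controlled norms despite the small divisors, and that the associated vector fields remain tame through each of the finitely many normalization steps. Convergence is not an issue here, since the iteration is finite (up to order $K+2$); the genuine difficulty is the quantitative control of the class under the homological operator, which is the technical content imported from \cite{bam08}. The vanishing frequency $\omega_0=0$ is, by contrast, only a structural subtlety, handled by retaining the non-commuting $I_0$-terms in $Z^{(K)}$ and, downstream, by the gauge invariance of \eqref{schro}.
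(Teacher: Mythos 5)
Your proposal is correct and follows essentially the same route as the source: the paper itself gives no proof of this statement, importing it verbatim as Theorem 4.15 of \cite{bam08} after verifying its two hypotheses (the nonlinearity has localized coefficients, and the frequencies satisfy $(K\text{-NR})$ via Theorem \ref{freqTh}), and your sketch accurately reconstructs the Lie-transform/homological-equation scheme of that reference, including the correct treatment of the resonant $L_0\neq 0$ monomials caused by $\omega_0=0$ and the role of the restriction $\sum_{j>N}|L_j|\leq 2$. You rightly identify that the only genuinely hard step --- closure of the localized-coefficient class under the small-divisor homological operator with tame vector-field estimates --- is exactly the technical content one must import from \cite{bam08}, which is the same logical position the paper takes.
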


This theorem however is not enough to control the solution since
as emphasized at point (i), due to the fact that the zero mode has
zero frequency, $I_0\equiv \xi_0\eta_0$ does not commute with
$Z^{(K)}$ and thus its modulus can a priori grow in an unbounded
way. However, as we are going to show in a while, this cannot happen
due to the Gauge invariance.

\begin{remark}
Due to Gauge invariance, the $L^2$ norm is preserved for the original
nonlinear dynamics and since $-\Delta$ is self-adjoint on $L^2$
one has
$$\Gamma(\xi,\eta):=\int_{\mathbb S^d} \psi \overline \psi \,dx=
\sum_{jk} \eta_{jk}\xi_{jk}= \sum_{j,k}|\xi_{jk}|^2. $$
\end{remark}

\begin{remark}\label{gauge}
Expanding $H_p$ is Taylor series one has
$$H_p(\xi, \eta)= \sum_{J,L} H_{JL} \eta^L \xi^J\ ,\quad
\eta^L:=\prod_{jk}\eta_{jk}^{L_{jk}}$$ and similarly for $\xi^J$.  Due
to Gauge invariance $H_{JL} \neq 0$ implies
$$\left \{ \Gamma, \xi^L\eta^J \right \}=i\sum_{jk} (L_{jk}-J_{jk})
\xi^L \eta^J =0 \ ,$$
which, in turn implies 
\begin{equation}
\label{G.1}
\sum_{jk} (L_{jk}-J_{jk})=0
\end{equation}
\end{remark}

\noindent
{\it End of the proof of Theorem \ref{main}.}  The Gauge invariance is
conserved after the Birkhoff normal form transformation (see
e.g. \cite{bamsacc}). It follows that {\bf any monomial $\xi^L\eta^J$
  which is present in the normal form $Z^{(K)}$ fulfills the property
  \eqref{G.1}.} However, if a monomial is present in the normal form
it must also commute with all the $I_j$, $j\not=0$. It follows that
the indexes must fulfill
\begin{equation}
\label{G.2}
\sum_{l}(L_{jk}-J_{jk})=0\ ,\quad \forall j\not=0\ ,
\end{equation}
which together with \eqref{G.1} implies $L_0-J_0=0$, which in turn
implies that $\left\{Z^{(K)},I_0\right\}=0$. Then Theorem \ref{main}
follows exactly in the same way in which Proposition 4.1 of
\cite{bam08} follows from Theorem 4.3 of that paper. \qed


\def\cprime{$'$} \def\cprime{$'$} \def\cprime{$'$}

\end{document}